\newtheorem{thm}{Theorem}
\newtheorem{theorem}{Theorem}
\newtheorem*{theorem*}{Theorem}
\newtheorem{lemma}[thm]{Lemma}
\newtheorem*{thm*}{Theorem}
\newtheorem*{thmA*}{Theorem A}
\newtheorem*{thmB*}{Theorem B}
\newtheorem*{theoremL*}{Theorem L}
\newtheorem*{theoremPL1*}{Theorem PL1}
\newtheorem*{theoremPL2*}{Theorem PL2}
\newtheorem*{TheoremWF*}{Theorem WF}
\newtheorem*{TheoremBP*}{Theorem BP}
\newtheorem*{cor*} {Corollary}
 \newtheorem*{defn*} {Definition}
\newtheorem*{ZL*}{Zalcman's Lemma}
\newtheorem*{Zp*}{ZP1 Lemma}
\newtheorem*{ZP*}{ZP2 Lemma}
\newtheorem*{Ne*}{N Lemma}
\newtheorem*{claim*}{Claim}
  \theoremstyle{definition}
   \newtheorem{remark}{Remark}
\newtheorem*{examp*}{Example}
\newtheorem*{remark*}{Remark}
\newtheorem*{remarks*}{Remarks}
 \theoremstyle{remark}
\newtheorem*{notation*}{Notation}
\newcommand{\thmref}[1]{Theorem~\ref{#1}}
\newcommand{\lemref}[1]{Lemma~\ref{#1}}
 \renewcommand{\sectionmark}[1]{}
\begin{document}

\baselineskip18pt

 \title[Non explicit counterexample to a problem of quasi-normality]
 {A non explicit counterexample to a problem of quasi-normality}

 \author[Shahar Nevo]{Shahar Nevo
 }

 \address{Shahar Nevo\\ Department of Mathematics\\
 Bar-Ilan University, 52900 Ramat-Gan, Israel}
  \email{nevosh@macs.biu.ac.il}

  \author[Xuecheng Pang]{Xuecheng Pang
  }
  \address{Xuecheng Pang\\ Department of Mathematics,
East China Normal University,\linebreak Shanghai 200062, P.~R.
China} \email{xcpang@euler.math.ecnu.edu.cn}


 \begin{abstract}
In 1986, S.Y. Li and H. Xie proved the following theorem:
\textit{Let $k\ge 2$ and let $\mathcal F$ be a family of functions
meromorphic in some domain $D,$ all of whose zeros are of
multiplicity at least $k.$ Then $\mathcal F$ is normal if and only
if the family $\mathcal
F=\left\{\frac{f^{(k)}}{1+(f)^{k+1}}:f\in\mathcal F\right\}$ is
locally uniformly bounded in $D.$}

Here we give, in the case $k=2,$ a counterexample to show that if
the condition on the multiplicities of the zeros is omitted, then
the local uniform boundedness of $\mathcal F_2$ does not imply
even quasi-normality. In addition, we give a simpler proof for the
Li-Xie theorem that does not use Nevanlinna's Theory which was
used in the original proof.
    \end{abstract}

  \subjclass[2010]{30A10, 30D45}

 \keywords{Quasi-normal family, Zalcman's Lemma, Differential inequality, Interpolation theory}

 \maketitle

\baselineskip20pt

 \section{Introduction}\label{introduction}

Marty's Theorem characterizes normality by using the first
derivative and it has an obvious geometrical meaning.

 H.L. Royden, \cite{3}, extended one direction of Marty's Theorem and proved

\begin{theorem}\label{thm1}
Let $\mathcal F$ be a family of meromorphic functions in $D,$ with
the property that for each compact set $K\subset D,$ there is a
positive increasing function $h_K$ such that
\begin{equation}\label{1}
|f'(z)|\le h_K(|f(z)|)
\end{equation} for all $f\in \mathcal F$ and $z\in K$.
Then $\mathcal F$ is normal in $D.$

 \end{theorem}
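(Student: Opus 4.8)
The plan is to deduce normality from Marty's criterion by means of Zalcman's Lemma, arguing by contradiction. Marty's Theorem characterizes normality through the local boundedness of the spherical derivative $f^{\#}(z)=\frac{|f'(z)|}{1+|f(z)|^{2}}$, and the whole content of Royden's extension is that the growth bound \eqref{1}, which controls $|f'|$ only in terms of $|f|$ and says nothing directly about $f^{\#}$ when $|f|$ is large, nonetheless forces normality. This is also where the obstruction to a naive argument lies: since $h_K$ may grow arbitrarily fast, the quantity $h_K(|f(z)|)/(1+|f(z)|^{2})$ need not be bounded, so one cannot simply read off a bound on $f^{\#}$ from \eqref{1}.

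To circumvent this I would pass to a rescaled limit. Suppose $\mathcal{F}$ is \emph{not} normal at some $z_0\in D$. By Zalcman's Lemma there are functions $f_n\in\mathcal{F}$, points $z_n\to z_0$, and scalars $\rho_n\to 0^{+}$ such that the rescalings
\[
g_n(\zeta):=f_n(z_n+\rho_n\zeta)
\]
converge locally uniformly on all of $\mathbb{C}$, in the spherical metric, to a \emph{nonconstant} meromorphic function $g$. Fix a compact set $K\subset D$ whose interior contains $z_0$; then for each fixed $R>0$ and all large $n$ one has $z_n+\rho_n\zeta\in K$ whenever $|\zeta|\le R$.

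The crux is the chain rule combined with \eqref{1}. Since $g_n'(\zeta)=\rho_n f_n'(z_n+\rho_n\zeta)$, inequality \eqref{1} gives, for $|\zeta|\le R$ and $n$ large,
\[
|g_n'(\zeta)|=\rho_n\,\bigl|f_n'(z_n+\rho_n\zeta)\bigr|\le \rho_n\, h_K\bigl(|g_n(\zeta)|\bigr).
\]
At any point $\zeta_0$ with $g(\zeta_0)\neq\infty$, spherical convergence upgrades to ordinary uniform convergence of $g_n$ to $g$ on a small disc about $\zeta_0$, so there the values $|g_n(\zeta)|$ stay bounded by a constant $M$; as $h_K$ is increasing, the right-hand side is then at most $\rho_n\,h_K(M)\to 0$. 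Hence $g_n'\to 0$ uniformly near $\zeta_0$, while at the same time $g_n'\to g'$ there, forcing $g'(\zeta_0)=0$.

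Since the poles of $g$ are isolated, $g'\equiv 0$ on a dense open subset of $\mathbb{C}$, and as $g'$ is meromorphic this yields $g'\equiv 0$, i.e.\ $g$ is constant — contradicting the nonconstancy furnished by Zalcman's Lemma. Therefore $\mathcal{F}$ is normal at every point of $D$, hence normal in $D$. The only delicate point to be made rigorous is the passage to the limit in the differential inequality: the potentially rapid growth of $h_K$ is harmless precisely because the factor $\rho_n\to 0$ annihilates the \emph{bounded} quantity $h_K(|g_n|)$ away from the isolated poles of $g$, and the finite points of $g$ are dense.
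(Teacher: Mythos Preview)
The paper does not actually prove Theorem~\ref{thm1}; it is quoted as Royden's result from \cite{3} and serves only as background motivation for Theorem~\ref{thm2}. So there is no ``paper's own proof'' to compare against.

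That said, your argument is correct, and it is precisely the Zalcman--Lemma proof of Royden's criterion that one finds, for instance, in Zalcman's survey \cite{6}. It is also exactly in the spirit of the reasoning the paper \emph{does} carry out: the discussion around \eqref{3a}--\eqref{5a} in the Introduction and the first paragraph of Section~2 run the same scheme for the higher-order inequality $|f^{(k)}|\le C(1+|f|^{k+1})$, rescaling via Zalcman, using $\rho_n^k\to 0$ to kill the bounded right-hand side away from the poles of $g$, and concluding $g^{(k)}\equiv 0$. Your proof is simply the case $k=1$ of that template, with the constant $C$ replaced by the increasing function $h_K$; the monotonicity of $h_K$ is what lets you bound $h_K(|g_n|)$ once $|g_n|\le M$ near a finite point of $g$. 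Nothing is missing.
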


This result was extended further in various directions. In
\cite{1}, \eqref{1} is limited to only 5 values. In
\cite[Thm.2]{4}, $h_K$ is replaced by a nonnegative function that
needs to be bounded in a neighborhood of some $x_0,$ $0\le
x_0<\infty.$ Then, in \cite{6} it was shown that it is enough that
$h_K$ be finite only in a single point $x_0,$ $  x_0>0<\infty.$
Moreover, in \cite[Thm.3]{4}, this result is extended further to
higher derivatives, i.e., \eqref{1} is replaced by
 $|f^{(\ell)}(z)|\le h_K(|f(z)|)$, $f\in\mathcal F,$ $z\in K,$
where $\ell\ge 2$ and the members of $\mathcal F$ have zeros of
multiplicity $\ge l.$ The following generalization of Marty's
Theorem also deals with higher derivatives.

\begin{theorem}\label{thm2} {\rm \cite{2}}
Let $\mathcal F$ be a family of functions meromorphic on $D$ such
that each $f\in\mathcal F$ has zeros only of multiplicity $\ge k$.
Then $\mathcal F$ is normal in $D$ if and only if the family
 \begin{equation}\label{2a}\mathcal F_k=\left\{\dfrac{f^{(k)}}{1+|f^{k+1}|}:f\in\mathcal
 F\right\}\quad\text{is locally uniformly bounded in $D$.}
 \end{equation}
\end{theorem}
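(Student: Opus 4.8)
The plan is to prove both implications by contradiction, reducing each to the analysis of a limit function, and to replace the Nevanlinna-theoretic input of the original argument by Zalcman's Lemma together with Hurwitz's theorem. The heart of the matter is the sufficiency ``$\mathcal F_k$ locally bounded $\Rightarrow$ $\mathcal F$ normal''. So suppose $\mathcal F_k$ is locally uniformly bounded, say $|f^{(k)}/(1+|f|^{k+1})|\le M$ on a neighborhood of a point $z_0$ for all $f\in\mathcal F$, but $\mathcal F$ fails to be normal at $z_0$. I would apply Zalcman's Lemma to produce $f_n\in\mathcal F$, points $z_n\to z_0$ and scales $\rho_n\to 0^+$ for which $g_n(\zeta):=f_n(z_n+\rho_n\zeta)$ converges spherically and locally uniformly on $\mathbb C$ to a nonconstant meromorphic $g$. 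The decisive point is that this \emph{unscaled} rescaling keeps the whole expression $f^{(k)}/(1+|f|^{k+1})$ intact: using $f_n^{(k)}(z_n+\rho_n\zeta)=\rho_n^{-k}g_n^{(k)}(\zeta)$, the local bound becomes $|g_n^{(k)}(\zeta)|/(1+|g_n(\zeta)|^{k+1})\le M\rho_n^{k}$. Letting $n\to\infty$ at any $\zeta$ where $g$ is finite, so that $g_n$ and all of its derivatives converge uniformly near $\zeta$, forces $g^{(k)}(\zeta)=0$; hence $g^{(k)}\equiv 0$ on $\mathbb C$, so $g$ has no poles and is a nonconstant polynomial of degree at most $k-1$.

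To close the contradiction I would bring in the hypothesis on the zeros, which has not yet been used. Since the affine substitution preserves multiplicities, every zero of each $g_n$ has multiplicity at least $k$; by Hurwitz's theorem this is inherited by the limit, because a cluster of zeros of $g_n$ of total multiplicity $m$ collapses onto a zero of $g$ of multiplicity $m$, while a single zero of $g_n$ in that cluster already contributes at least $k$, forcing $m\ge k$. Thus every zero of $g$ has multiplicity at least $k$. But a nonconstant polynomial of degree at most $k-1$ has, by the fundamental theorem of algebra, a zero whose multiplicity is bounded by its degree, hence strictly less than $k$ — a contradiction. This proves that local uniform boundedness of $\mathcal F_k$ implies normality.

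For the converse, assume $\mathcal F$ is normal and suppose $\mathcal F_k$ is not locally uniformly bounded, so that along some $f_n\in\mathcal F$ and $z_n\to z_0$ the ratio tends to $\infty$. Passing to a subsequence, $f_n$ converges spherically and locally uniformly near $z_0$ to a limit $f$ that is meromorphic or identically $\infty$. Where $f(z_0)\ne\infty$ the values $f_n$ and the derivatives $f_n^{(k)}$ converge uniformly, so the ratio has a finite limit, contradicting the divergence. The only delicate case is a pole of $f$ (or $f\equiv\infty$), which I would treat by passing to $h_n:=1/f_n$, holomorphic near $z_0$ with $h_n\to 1/f$ uniformly together with all derivatives; writing $f_n^{(k)}=(1/h_n)^{(k)}=Q_n/h_n^{k+1}$ via the Fa\`a di Bruno formula, with $Q_n$ a fixed polynomial in $h_n,\dots,h_n^{(k)}$, the ratio collapses to $|Q_n|/(1+|h_n|^{k+1})\le|Q_n|$, which stays bounded near $z_0$ — again a contradiction. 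Notice that this direction never invokes the multiplicity hypothesis, consistent with the counterexample that is the main purpose of the paper.

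I expect the main obstacle to be concentrated in the sufficiency direction, and specifically in the two transfer steps: confirming that the lower bound $k$ on the multiplicity of zeros genuinely survives the passage to the Zalcman limit $g$, and checking that it is incompatible with the conclusion $g^{(k)}\equiv 0$. The one design choice on which everything hinges is rescaling with exponent $0$ rather than matching the order of the $k$-th derivative; this is precisely what preserves the full quantity $f^{(k)}/(1+|f|^{k+1})$ in the limit and is what allows the argument to bypass Nevanlinna's theory entirely.
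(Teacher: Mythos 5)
Your proof is correct, and the main (sufficiency) direction is essentially the paper's argument: Zalcman rescaling with exponent $0$, the observation that the bound forces $g^{(k)}\equiv 0$ so that $g$ is a polynomial of degree at most $k-1$, and the clash with the multiplicity hypothesis (you spell out the Hurwitz step that the paper leaves implicit, which is a welcome addition). Where you genuinely diverge is in the necessity direction. The paper splits into three cases according to the limit $f$ of a convergent subsequence: $f(z_0)$ finite, $f(z_0)=\infty$ with $f\not\equiv\infty$ (handled by a maximum-principle estimate on $f_n^{(k)}/(1+f_n^{k+1})$), and $f\equiv\infty$, for which it proves a separate induction lemma asserting that $f_n^{(\ell)}/f_n^{\ell+1}\Rightarrow 0$ whenever $f_n\overset\chi\Rightarrow\infty$. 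You instead treat the pole case and the $f\equiv\infty$ case in a single stroke by setting $h_n=1/f_n$, which is holomorphic near $z_0$ for large $n$, and using the identity $(1/h_n)^{(k)}=Q_k(h_n,\dots,h_n^{(k)})/h_n^{k+1}$ to collapse the ratio to $|Q_k|/(1+|h_n|^{k+1})\le|Q_k|$, bounded because $h_n$ and its derivatives converge locally uniformly. This buys you a shorter proof that dispenses with the paper's Lemma~\ref{lem2} and the maximum-principle detour; the paper's route, in exchange, isolates a reusable statement (the differential inequality $f_n^{(\ell)}/f_n^{\ell+1}\Rightarrow 0$) of independent interest. Both handle the one point worth checking in your version — that the ratio extends continuously across the poles of $f_n$, where it equals $|Q_k|$ evaluated at a zero of $h_n$ — and your argument is sound there.
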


The direction $(\Rightarrow)$ in \thmref{thm2} is true even
without the assumption that the zeros of each $f\in\mathcal F$ are
of multiplicity at least $k$. In Section 2, we give a simpler
proof for \thmref{thm2}, without using Nevanlinna's Theory. The
condition on the multiplicities of $f\in\mathcal F$ is essential
in the direction $(\Leftarrow)$. Indeed, let $\hat{\mathcal F}_k$
be the family of all polynomials of degree at most $k-1$ in some
domain $D\subset\mathbb C.$ Then $ \frac{f^{(k)}}{1+|f|^{k+1}}=0$
for each $f\in\hat{\mathcal F}_k$, but $\hat{\mathcal F}_k$ is not
normal in $D.$ However, $\hat{\mathcal F}_k$ is a quasi-normal
family in $D$ (of order $k-1).$ The question that naturally arises
is whether the condition \eqref{2a} implies quasi-normality.

The conjecture that \eqref{2a} implies quasi-normality (without
the assumption on the multiplicities of the zeros) gets support
also from another direction.

First let us set some notation.   For $z_0\in\mathbb C$ and $r>0,$
\ $\Delta(z_0,r)=\{z:|z-z_0|<r\}. $ We write $f_n\overset
\chi\Longrightarrow f$ on $D$ to indicate that the sequence
$\{f_n\}$ converges to $f$ in the spherical metric uniformly on
compact subsets of $D$ and $f_n\Rightarrow f$ on $D$ if the
convergence is in the Euclidean metric.

Let us recall the well-known result of L. Zalcman.

\begin{lemma} [Zalcman's Lemma]\label{lem1} \cite{6a} A family
$\mathcal F$ of functions meromorphic in some domain $D$ is not
normal at  $z_0\in D$ if and only if there exist points $z_n$ in
$D,$ $z_n\to z_0;$ numbers $\rho_n\to 0^+$, and functions
$f_n\in\mathcal F$ such that
\begin{equation}\label{3a}
f_n(z_n+\rho_n\zeta)\overset\chi\Rightarrow
g(\zeta)\quad\text{in}\quad \mathbb C,\end{equation} where $g$ is
a nonconstant meromorphic function in $\mathbb C.$
\end{lemma}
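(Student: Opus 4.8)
The plan is to deduce everything from Marty's Theorem, which characterizes normality on a domain by the local uniform boundedness of the spherical derivatives $f^{\#}(z)=|f'(z)|/(1+|f(z)|^2)$. The substance of the lemma is Zalcman's rescaling, and the computational workhorse is the chain rule
$$\big(f(a+\rho\,\cdot)\big)^{\#}(\zeta)=\rho\, f^{\#}(a+\rho\zeta).$$
Throughout I write $g_n\overset{\chi}{\Rightarrow}g$ for spherical local uniform convergence, and I normalize $z_0=0$.

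For the easy direction $(\Leftarrow)$, suppose points $z_n$, scales $\rho_n\to 0^+$ and functions $f_n\in\mathcal F$ exist with $f_n(z_n+\rho_n\zeta)\overset{\chi}{\Rightarrow}g$, where $g$ is nonconstant. Then $g^{\#}\not\equiv 0$, so I fix $\zeta_0$ with $g^{\#}(\zeta_0)>0$. By the chain rule and the continuity of the spherical derivative under $\chi$-convergence,
$$\rho_n\, f_n^{\#}(z_n+\rho_n\zeta_0)=\big(f_n(z_n+\rho_n\,\cdot)\big)^{\#}(\zeta_0)\longrightarrow g^{\#}(\zeta_0)>0 .$$
Since $\rho_n\to 0^+$, this forces $f_n^{\#}(z_n+\rho_n\zeta_0)\to\infty$, while $z_n+\rho_n\zeta_0\to z_0$; so the spherical derivatives are unbounded in every neighborhood of $z_0$, and Marty's Theorem shows $\mathcal F$ is not normal at $z_0$.

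For the hard direction $(\Rightarrow)$, I would run the rescaling on shrinking disks so that the base points converge to $0$ automatically. Fixing $m$ and working on $\Delta(0,1/m)$, non-normality at $0$ together with Marty's Theorem yields $f_m\in\mathcal F$ with $\max_{|z|\le 1/(2m)}f_m^{\#}(z)$ arbitrarily large, say $\ge m^2$. On $\overline{\Delta}(0,1/m)$ I consider the weighted function $w(z)f_m^{\#}(z)$ with $w(z)=1-m^2|z|^2$, which vanishes on the boundary and hence attains its maximum $M_m$ at an interior point $z_m$; the lower bound on $f_m^{\#}$ forces $M_m\to\infty$. Setting $\rho_m=1/f_m^{\#}(z_m)=w(z_m)/M_m\to 0^+$ and $g_m(\zeta)=f_m(z_m+\rho_m\zeta)$, the inclusion $z_m\in\Delta(0,1/m)$ gives $z_m\to 0$ for free.

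The heart of the argument, and the step I expect to be the main obstacle, is the uniform bound on $g_m^{\#}$. By construction $g_m^{\#}(0)=\rho_m f_m^{\#}(z_m)=1$, while the maximality of $M_m$ gives $f_m^{\#}(z_m+\rho_m\zeta)\le M_m/w(z_m+\rho_m\zeta)$, whence
$$g_m^{\#}(\zeta)=\rho_m f_m^{\#}(z_m+\rho_m\zeta)\le\frac{w(z_m)}{w(z_m+\rho_m\zeta)} .$$
A careful estimate of this weight ratio (using that $\rho_m|\zeta|$ is small compared with the distance from $z_m$ to $|z|=1/m$, valid for $|\zeta|\le T_m$ with $T_m\to\infty$ since $M_m/m\to\infty$) shows $g_m^{\#}(\zeta)\le 4$ on $|\zeta|\le T_m$, and that each $g_m$ is meromorphic there. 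Thus the spherical derivatives of $\{g_m\}$ are bounded on every fixed disk for $m$ large, so by Marty's Theorem together with a diagonal argument and the spherical Arzel\`a--Ascoli theorem a subsequence converges $\overset{\chi}{\Rightarrow}g$ on all of $\mathbb{C}$ to a meromorphic limit. Finally $g^{\#}(0)=\lim g_m^{\#}(0)=1>0$, so $g\not\equiv\infty$ and $g$ is nonconstant, completing the proof. The delicate points to get right are the choice of weight ensuring simultaneously $g_m^{\#}(0)=1$ and boundedness on disks exhausting $\mathbb{C}$, and the compatibility of $\chi$-convergence with the evaluation $g_m^{\#}(0)\to g^{\#}(0)$.
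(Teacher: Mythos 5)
The paper does not prove this lemma at all: it is quoted as a known result from Zalcman's 1975 paper \cite{6a} and used as a black box, so there is no internal proof to compare yours against. What you have written is, in essence, the standard rescaling proof from the literature, and it is correct. The easy direction via the chain rule $\bigl(f(a+\rho\,\cdot)\bigr)^{\#}(\zeta)=\rho\,f^{\#}(a+\rho\zeta)$ and Marty's Theorem is fine. In the hard direction, maximizing $w(z)f_m^{\#}(z)$ with a weight vanishing on the boundary of shrinking disks does simultaneously produce $z_m\to z_0$, $\rho_m\to 0^+$, the normalization $g_m^{\#}(0)=1$, and the bound on expanding disks; the arithmetic checks out ($M_m\ge\tfrac34 m^2$, the admissible radius $T_m$ is of order $M_m/m\to\infty$, and the weight ratio is at most $2(1+m|z_m|)\le 4$ once $\rho_m|\zeta|$ is at most half the distance from $z_m$ to the boundary circle). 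The one point to phrase more carefully is the very last step: you must exclude $g\equiv\infty$ before writing $g^{\#}(0)$; this follows because $g\equiv\infty$ would give $1/g_m\Rightarrow 0$ and hence $g_m^{\#}\to 0$ locally uniformly, contradicting $g_m^{\#}(0)=1$. With that said, $g^{\#}(0)=1$ forces $g$ to be a nonconstant meromorphic function, and the proof is complete.
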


 Now, suppose
that $g$ is a limit function from \eqref{3a}, and we have some
$C>0$ and $r>0$ such that
\begin{equation}\label{4a}
\frac{|f_n^{(k)}(z)|}{1+|f_n(z)|^{k+1}}\le C\quad\text{for
every}\quad z\in\Delta(z_0,r)\quad\text{and}\quad n\in\mathbb
N.\end{equation}

Let us denote the poles of $g$ (if any) by $P_g.$ Then
\begin{equation}\label{5a}
f_n(z_n+\rho_n\zeta)\Rightarrow g(\zeta)\quad\text{on}\quad
\mathbb C\setminus P_g.\end{equation} (Here we substitute
``$\overset\chi\Rightarrow$" by ``$\Rightarrow$" since in every
compact subset of $C\setminus P_g$, $f_n(z_n+\rho_n\zeta)$ is
holomorphic for large enough $n).$

Differentiating \eqref{5a} $k$ times given
$$\rho_n^k f_n^{(k)}(z_n+\rho_n\zeta)\Rightarrow
g^{(k)}(\zeta)\quad\text{in}\quad \mathbb C\setminus P_g.$$ But
then by \eqref{3a} and \eqref{4a}, we get that $g^{(k)}\equiv 0$
in $\mathbb C\setminus P_g$ and so $g^{(k)}\equiv0$ in $\mathbb
C.$ This implies that $g$ is a polynomial of degree at most $k-1.$
Hence, we get that the collection of all limit functions obtained
by \eqref{3a} is a quasi-normal family.

However, it turns out that without the condition on the
multiplicities of the zeros, the family $\mathcal F$ of
\thmref{thm2} is not quasi-normal.

We suffice to construct a detailed counterexample for the case
$k=2.$ This is the content of Section 3.

\section{Proof of \thmref{thm2}}

Assume first that $\mathcal F$ is locally uniformly bounded in
$D,$ and suppose by negation that $\mathcal F_k$ is not normal at
some $z_0\in D.$ Then similarly to \eqref{3a} we get the existence
of $f_n,z_n,\rho_n$ and $g$ such that
$f_n(z_n+\rho_n\zeta)\overset\chi\Rightarrow g(\zeta)$ in $\mathbb
C.$ With the same reasoning, we deduce that $g$ is a polynomial of
degree at most $k-1.$ But now according to the condition on the
multiplicities of the zeros of each $f_n,$ we get that the zeros
of $g$ also must be of multiplicity at least $k.$ This implies
that $g$ has no zeros and thus $g$ is a constant function, a
contradiction.

For the proof of the opposite direction, we need the following
lemma.

\begin{lemma}\label{lem2}
Let $\{f_n\}_{n=1}^\infty$ be a sequence of meromorphic functions
in a domain $D,$ satisfying $f_n\overset\chi\Rightarrow \infty$ in
$D.$ Then for every $\ell\in\mathbb N$,
$\frac{f_n^{(\ell)}}{f_n^{\ell+1}}\Rightarrow 0$ in $D.$
\end{lemma}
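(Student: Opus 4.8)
The plan is to reduce everything to the behaviour of $g_n := 1/f_n$. The hypothesis $f_n\overset\chi\Rightarrow\infty$ on $D$ means precisely that on every compact $K\subset D$ we have $|f_n|\to\infty$ uniformly; in particular, for all large $n$ the function $f_n$ has no zeros on $K$, so $g_n$ is holomorphic there (at poles of $f_n$ it simply has a zero) and $g_n\Rightarrow 0$ uniformly on $K$. By the Cauchy estimates -- equivalently, the fact that locally uniform convergence of holomorphic functions entails locally uniform convergence of all derivatives -- this forces $g_n^{(j)}\Rightarrow 0$ on $D$ for every $j\ge 0$. The whole lemma will then follow once I rewrite $f_n^{(\ell)}/f_n^{\ell+1}$ as a universal polynomial expression in $g_n,g_n',\dots,g_n^{(\ell)}$ that vanishes when all of these vanish.

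Concretely, I would set $Q_\ell:=f^{(\ell)}/f^{\ell+1}$ (suppressing the index $n$) and prove by induction on $\ell$ that $Q_\ell=P_\ell(g,g',\dots,g^{(\ell)})$ for a fixed polynomial $P_\ell$ with integer coefficients and \emph{no constant term}, i.e.\ every monomial of $P_\ell$ contains at least one of the factors $g,g',\dots,g^{(\ell)}$. The base case is $Q_1=f'/f^2=-g'$, since $g'=-f'/f^2$. For the inductive step, differentiating the defining relation gives $Q_\ell'=f^{(\ell+1)}/f^{\ell+1}-(\ell+1)f^{(\ell)}f'/f^{\ell+2}$; multiplying by $g=1/f$ and using $f'/f^2=-g'$ yields the clean recursion
\begin{equation*}
Q_{\ell+1}=g\,Q_\ell'-(\ell+1)\,g'\,Q_\ell .
\end{equation*}
If $P_\ell$ has no constant term, then $P_\ell'$ (a sum of monomials of the same degrees, produced by the chain rule) also has none, and hence so do $g\,P_\ell'$ and $g'\,P_\ell$; thus $P_{\ell+1}$ has no constant term, completing the induction. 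For instance this gives $Q_2=-g\,g''+2(g')^2$.

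With the identity in hand the conclusion is immediate: on a fixed compact $K$, for large $n$ each of $g_n,g_n',\dots,g_n^{(\ell)}$ is holomorphic and tends to $0$ uniformly, hence is uniformly bounded; since every monomial of $P_\ell$ contains at least one factor tending to $0$ while the remaining factors stay bounded, each monomial $\Rightarrow 0$ uniformly on $K$, and the finite sum gives $f_n^{(\ell)}/f_n^{\ell+1}=P_\ell(g_n,\dots,g_n^{(\ell)})\Rightarrow 0$ on $K$. As $K$ was arbitrary, this is the assertion.

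I expect the only genuine subtlety to be bookkeeping rather than depth. First, the reduction step requires noting that $f_n\ne 0$ on each compact set for $n$ large, so that $g_n$ (and the ratio itself) is holomorphic there -- which is exactly what $f_n\overset\chi\Rightarrow\infty$ provides. Second, the essential point is to track that the polynomials $P_\ell$ carry no constant term, since this is what converts ``all derivatives of $g_n$ vanish'' into ``the ratio vanishes.'' Everything else is the routine verification of the recursion.
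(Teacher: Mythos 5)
Your proof is correct. It shares the paper's essential mechanism --- reduce to $g_n=1/f_n$, which together with its derivatives tends to $0$ locally uniformly (after noting that $f_n$ is eventually zero-free on each compact set, so that $g_n$ is holomorphic there), and produce the next ratio by differentiation --- but the packaging is genuinely different. The paper runs the induction directly on the convergence statements: assuming $f_n^{(m)}/f_n^{m+1}\Rightarrow 0$ for all $m\le\ell$, it differentiates $f_n^{(\ell)}/f_n^{\ell+2}\Rightarrow 0$ to obtain $\frac{f_n^{(\ell+1)}}{f_n^{\ell+2}}-(\ell+2)\frac{f_n'}{f_n^2}\cdot\frac{f_n^{(\ell)}}{f_n^{\ell+1}}\Rightarrow 0$ and disposes of the second term using the cases $m=1$ and $m=\ell$ of a strong induction hypothesis. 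You instead extract a purely algebraic identity, $f^{(\ell)}/f^{\ell+1}=P_\ell(g,g',\dots,g^{(\ell)})$ with $P_\ell$ a fixed polynomial having no constant term, via the recursion $Q_{\ell+1}=g\,Q_\ell'-(\ell+1)g'Q_\ell$ --- which is the paper's differentiation step in disguise, since $(gQ_\ell)'=Q_{\ell+1}+(\ell+2)g'Q_\ell$ and $g'Q_\ell=-(f'/f^2)(f^{(\ell)}/f^{\ell+1})$ --- and then invoke the analysis only once, in the form $g_n^{(j)}\Rightarrow 0$ for every $j$. Your route buys a clean separation of algebra from analysis, a simple rather than strong induction, and explicit formulas such as $Q_2=-gg''+2(g')^2$; the paper's route is shorter because it never tracks the shape of the polynomial, only that each newly produced term is a product of quantities already known to tend to $0$. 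Both arguments are complete; your explicit attention to the ``no constant term'' property and to where $g_n$ is holomorphic is exactly the right bookkeeping.
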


\begin{proof}
We apply induction. Since $\frac{1}{f_n(z)}\Rightarrow 0$ in $D,$
we can differentiate it and obtain that
$\frac{f_n'(z)}{f_n^2(z)}\Rightarrow 0$ in $D,$ and this proves
the case $\ell=1.$

\end{proof}

Assume that the lemma holds for $m\le\ell$. We prove it now for
the case $m=\ell+1.$ We have
$\frac{f_n^{(\ell)}}{f_n^{\ell+1}}(z)\Rightarrow 0$ in $D,$ and
hence, since $f_n(z)\Rightarrow\infty$ in $D,$ also
$\frac{f_n^{(\ell)}(z)}{f_n(z)^{\ell+2}}\Rightarrow 0$ in $D.$
Differentiating the last convergence gives
$$\frac{f_n^{(\ell+1)}(z)}{f_n^{\ell+2}}-(\ell+2)\frac{f_n'}{f_n^2} \frac{f_n^{(\ell)}}{f_n^{\ell+1}}(z)\Rightarrow
0\quad\text{in}\quad D.$$ The induction assumption for $m=1$ and
$m=\ell$ implies that the right term in the left hand above
converges uniformly to 0 on compacta of $D$, and thus also
$\frac{f_n^{(\ell+1)}}{f_n^{\ell+2}}(z)\Rightarrow 0$ in $D,$ as
required.

Let us prove now the opposite direction of \thmref{thm2}. Assume
that $\mathcal F$ is normal in $D$, and suppose by negation that
$\mathcal F_k$ is not locally uniformly bounded in any
neighborhood of some $z_0\in D.$ Thus, there exist functions
$f_n\in\mathcal F,$ and points $z_n\to z_0$ such that
\begin{equation}\label{6a}
\frac{f_n^{(k)} {(z_n)}}{1+|f_n^{k+1}(z_n)|}\underset
{n\to\infty}\rightarrow\infty.
\end{equation}
By the normality of $\mathcal F$, $\{f_n\}_{n=1}^\infty$ has a
subsequence that, without loss of generality, we also denote by
$\{f_n\}_{n=1}^\infty$, such that $f_n\overset\chi\Rightarrow f$
in $D.$

We separate now into cases according to the nature of $f.$

\noindent \textbf{Case 1.1} $f(z_0)\in\mathbb C.$

For small enough $r>0$, $f_n^{(k)}(z)\Rightarrow f^{(k)}(z)$ in
$\Delta(z_0,r),$ and also $1+|f_n^{k+1}(z)|\Rightarrow
1+|f(z)|^{k+1}$ in $\Delta(z_0,r).$ Since $1+|f_n(z)|^{k+1}\ge 1,$
we get that $\frac{f_n^{(k)}(z)}{1+|f_n(z)|^{k+1}}\Rightarrow
\frac{f^{(k)}(z)}{1+|f(z)|^{k+1}}$ in $\Delta(z_0,r),$ a
contradiction to \eqref{6a}.

\noindent \textbf{Case 1.2} $f(z_0)=\infty.$

Here, for small enough $r>0$, $f$ is holomorphic in
$\Delta'(z_0,r)$ and in addition $|f_n(z)|\ge 2$ and $|f(z)|\ge 2$
for large enough $n.$ Thus $\frac{f_n(z)}{1+f_n(z)^{k+1}}$ are
holomorphic in $\Delta(z_0,r)$ for large enough $n.$ We then get
by the maximum principle that
$$\frac{f_n^{(k)}(z)}{1+f_n(z)^{k+1}}\Rightarrow
\frac{f^{(k)}(z)}{1+f(z)^{k+1}}\quad\text{in}\quad \Delta(z_0,r)$$
and then for large enough $n,$
$$\max_{|z-z_0|\le r/2}\frac{|f_n^{(k)}(z)|}{1+|f_n(z)|^{k+1}}\le\max_{|z-z_0|\le r/2}\frac{|f_n^{(k)}(z)|}{|1+f_n(z)^{k+1}|}
\le\max_{|z-z_0|\le r/2}\frac{|f^{(k)}(z)|}{|1+f(z)^{k+1}|}+1.$$
The last expression is a positive constant, that does not depend
on $n$ and this is a contradiction to \eqref{6a}.

\newpage

\noindent \textbf{Case 2} $f  =\infty.$

In this case, we get by \lemref{lem2} that
$\frac{f_n^{(k)}(z)}{f_n(z)^{k+1}}\Rightarrow 0$ in $D  ,$ and
this is a contradiction to \eqref{6a}.

\section{Constructing the counterexample}

We construct a sequence of holomorphic functions
$\{f_n\}_{n=1}^\infty$, such that for every $n\ge1$ and
$z\in\Delta(0,2)$, $\frac{|f_n''(z)|}{1+|f_n(z)|^3}\le 1$ and
$\{f_n\}_{n=1}^\infty$ is not quasi-normal in $\Delta(0,2).$

Let $g_n(z)=z^n-1,$ $n\ge1.$ The zeros of $g_n$ are all simple,
$g_n(z_\ell^{(n)})=0,$ $0\le \ell\le n-1,$ where $z_\ell^{(n)}$ is
the $\ell$-th root of unity of order $n.$ Define for every
$n\ge1$, $h_n=g_ne^{p_n},$ where $p_n$ is a polynomial to be
determined. We have $h_n'=(g_n'+g_np_n')e^{p_n},$ and
$g_n'(z_\ell^{(n)})\ne0,$ $0\le\ell\le n-1.$ We want that
\begin{equation}\label{3}
p_n'(z_\ell^{(n)})=-g_n''(z_\ell^{(n)})/2g_n'(z_\ell^{(n)}),\quad
0\le\ell\le n-1\end{equation} to get that $h_n''(z_\ell^{(n)})=0.$

We have
$$h_n^{(3)}=e^{p_n}\big(g_n^{(3)}+3g_n''p_n'+\boldsymbol{3g_n'p_n''}+g_np_n^{(3)}+3g_n'p_n'{^2}+3g_np_n'p_n''+g_np_n'{}^3\big)$$

We want that
\begin{equation}\label{4}
p_n''(z_\ell^{(n)})=-(g_n^{(3)}+3g_n''p_n'+3g_n'p_n'{}^2)/3g_n'\Big|_{z=z_\ell^{(n)}},\quad
0\le\ell\le n-1
\end{equation}
to get  $h_n^{(3)}(z_\ell^{(n)})=0.$

Observe that when \eqref{3} is satisfied to determine
$p_n'(z_\ell^{(n)})$, then as in \eqref{3}, condition \eqref{4} is
in fact a condition that depends only on the values of $g_n$ and
its derivatives at the points $z_\ell^{(n)},$ $0\le\ell\le n-1.$

We have
\begin{align*}
h_n^{(4)}&=e^{p_n}\big(g_n^{(4)}+4g_n^{(3)}p_n'+6g_n''p_n''+\boldsymbol{4g_n'p_n^{(3)}}+
 g_np_n^{(4)}+6g_n''p_n'{}^2+12
 g_n'p_n'p_n''+3g_np_n''{}^2\\
 &\quad+2g_np_n'p_n^{(3)}+4g_n'p_n'{}^3+6g_np_n'{}^2p_n''+g_np_n'{}^4\big),
\end{align*}
we want that
\begin{align}
p_n^{(3)}(z_\ell^{(n)})&=-\big(g_n^{(4)}+4g_n^{(3)}p_n'+6g_n''p_n''+6g_n''p_n'{}^2+12g_n'p_n'p_n''+4g_n'p_n'{}^3\big)/4g_n'
\Big|_{z=z_\ell^{(n)}},\label{5}\\
&\quad  \quad 0\le\ell\le n-1\nonumber
\end{align}
to get $h_n^{(4)}(z_\ell^{(n)})=0.$ Observe that when \eqref{3}
and \eqref{4} are satisfied to determine $p_n'(z_\ell^{(n)})$ and
$p_n''(z_\ell^{(n)})$, then also \eqref{5} is in fact a condition
that depends only on the values of $g_n$ and its derivatives at
the points $z_\ell^{(n)}$, $0\le \ell\le n-1.$ By the theory of
interpolation \cite[p.~52]{5}, for every $n\ge1$ the conditions
\eqref{3}, \eqref{4} and \eqref{5} can be achieved with a
polynomial $p_n$ of degree at most $4n-1.$

Now, by our construction, for every $n\ge 1,$ $h_n''$ has a zero
of multiplicity at least 3 at each point $z_\ell^{(n)}, $
$0\le\ell\le n-1$, and so $\frac{h_n''}{h_n^3}$ is holomorphic (in
fact, entire) in $\Delta(0,2).$ Thus we have
$\max\limits_{z\in\overline\Delta(0,2)}|h_n''(z)/h_n^3(z)|=c_n>0.$

Define now for every $n\ge1,$ $f_n:=a_n\cdot h_n,$ where $|a_n|$
is a large enough constant such that
$\left|\frac{c_n}{a_n^2}\right|\le 1$ and such that every
subsequence of $\{f_n\}_{n=1}^\infty$ is not normal at any point
of $\partial\Delta=\{z:|z|=1\}.$ In fact, we can take $|a_n|$ to
be so large such that $f_n\to\infty$ locally uniformly in $\mathbb
C\setminus\partial\Delta.$

Now, for $z=z_\ell^{(n)},$ $0\le\ell\le n-1$,
$f_n''(z_\ell^{(n)})=0$ and thus the left hand side of \eqref{2a}
is zero. If $z\ne z_\ell^{(n)},$ $z\in\Delta(0,2),$ then
$f_n(z)\ne0$ and
$$\frac{|f_n''(z)|}{1+|f_n(z)|^3}\le
\frac{|f_n''(z)|}{|f_n(z)|^3}=\frac{1}{|a_n|^2}\,
\frac{|h_n''(z)|}{|h_n(z)|^3}\le \frac{c_n}{|a_n|^2}\le 1$$ and
\eqref{2a} is satisfied (uniformly in $\Delta(0,2)).$ This
completes the proof that $\{f_n\}_{n=1}^\infty$ has the desired
properties to be a counterexample.

\section{Some Remarks}

\begin{remark}\label{rem1}
We have not obtained an explicit formula for $f_n,$ and this
explains the title of this paper.
\end{remark}

\begin{remark}\label{rem2}
We have shown in fact a stronger counterexample: The condition
that $\left\{\frac{f''}{f^3}:f\in\mathcal F\right\}$ is locally
uniformly bounded does not imply quasi-normality of the
family~$\mathcal F.$\end{remark}

\begin{remark}\label{rem3}
An interesting open problem is to find a differential inequality
(maybe of the sort that was mentioned in this paper) that implies
quasi-normality and does not imply normality.

\end{remark}
\bibliographystyle{amsplain}

\bibliographystyle{amsplain}

\end{document}